\newcommand{\Prb}{\boldsymbol{\Pr}}
\newcommand{\E}{\boldsymbol{\mathrm{E}}}
\newcommand{\Pm}{\mathcal{P}}
\newcommand{\Qm}{\mathcal{Q}}
\newcommand{\Gm}{\mathcal{G}}
\newcommand{\Rm}{\mathcal{R}}
\newtheorem{thm}{Theorem}
\newtheorem{assum}{Assumption}
\newtheorem{rem}{Remark}
\newtheorem{lem}{Lemma}
\newtheorem{exmp}{Example}
\begin{document}

%
%
%
\title{Performance vs complexity trade-offs for Markovian networked jump estimators}
\author{D. Dolz, D. E. Quevedo, I. Peñarrocha, and R. Sanchis
\thanks{D. Dolz , I. Peñarrocha, and R. Sanchis are with Department of Industrial System Engineering and Design, Universitat Jaume I of Castelló, Spain {\tt\small \{ddolz,ipenarro,rsanchis\}@uji.es}}
\thanks{D. E. Quevedo is with the School of Electrical Engineering and Computer Science, The University of Newcastle, NSW, Australia {\tt\small dquevedo@ieee.org}}
}
%
%
%
%
%
\maketitle

\begin{abstract}
This paper addresses the design of a state observer for networked systems with random delays and dropouts. The model of plant and network covers the cases of multiple sensors, out-of-sequence and buffered measurements. The measurement outcomes over a finite interval model the network measurement reception scenarios, which follow a Markov distribution. We present a tractable optimization problem to precalculate off-line a finite set of gains of jump observers. The proposed procedure allows us to trade the complexity of the observer implementation for achieved performance.
Several examples illustrate that the on-line computational cost of the observer implementation is lower than that of the Kalman filter, whilst the performance is similar.
\end{abstract}

\section{Introduction}
Networked control systems are control systems where the information (output measurements and/or control inputs) is transmitted via a shared network. The use of networks reduces the installation cost and increases the flexibility, but leads to several network-induced effects such as time delays and packet dropouts (see~\cite{hespanha2007survey} and \cite{chen2011guest}). Control and estimation through a network must overcome these problems.

Considering the estimation problem, Kalman filter based solutions may give optimal performance, but at the expense of significant on-line computational complexity. The observer gain is time varying and must be computed online, even for linear time invariant systems (e.g.~\cite{liu2004kalman},~\cite{Sinopoli2004} and~\cite{Schenato08}). This motivates the search for computationally low cost alternatives.
In particular, the use of precalculated gains decreases the need of the implementation computing capacity, but increases the estimation error and requires both storage and a mechanism to choose the appropriate gain at each instant (e.g.~\cite{Smith2003,Sahebsara07,Penya12} and~\cite{Han2013}). The jump linear estimator approach proposed in ~\cite{Smith2003} improves the estimation with a set of precalculated gains which are chosen depending on the history of measurement availabilities. A better performance is achieved at the cost of increasing the estimator complexity in terms of storage requirements and gain selection mechanism. An approach of intermediate complexity is presented in recent work~\cite{Han2013} where the authors propose a gain dependency on the possible instant and arrival delay for each measurement in a finite set. Computing the gains off-line takes advantage from prior statistical knowledge about the network behavior. When the network behaves as a Markov chain, the design uses the transition probabilities (\cite{Smith2003} and \cite{Han2013}).

In this paper we face the estimator design problem for multisensor systems and networks with induced unbounded time-varying delays with known distribution. We derive a finite measurement outcomes parameter that models the network effects and follows a finite Markov chain. Based on this process, we propose a jump linear estimator that gives favorable trade-offs between on-line computational burden and estimation performance. Furthermore, we analyze the effects of reducing the number of stored gains (i.e., complexity) by means of sharing the use of each gain for different values of the finite measurement outcomes parameter.

Two are the main contributions of our current work with respect to~\cite{Smith2003} and~\cite{Han2013}. First, we consider the multisensor with multiple delays scenario. Second, we introduce a flexible way to handle different strategies for the gain dependency to find a compromise between implementation cost and estimation performance. Moreover, the 
measurement reception model derived here allows to handle more complex gain observer dependencies that cannot be included in~\cite{Han2013}. The present work differs from our recent manuscript~\cite{Penya12} mainly in the consideration of the stochastic network behavior with unbounded consecutive dropouts instead of a deterministic approach.

The paper has the following structure. In Section~\ref{sec:prbapr} we describe the process, model the network effects, present the observer algorithm and derive estimation error expressions. In Section~\ref{sec:obsvdsgn} we develop the observer design, and demonstrate its convergence. In Section~\ref{sec:trade} we show how gain grouping approaches can be used to find a compromise between implementation cost and performance. Simulation studies are given in Section~\ref{sec:ej}, and Section~\ref{sec:conclu} draws conclusions.

\section{Problem approach}\label{sec:prbapr}
Let us consider linear time invariant discrete-time systems of the form
\begin{align}\label{estados}
&x[t+1]=A\,x[t]+B_u\,u[t]+B_w\,w[t],\\
&y_s[t]=c_s\,x[t]+v_s[t],
\end{align}
where $x\in\mathbb{R}^n$ is the state, $u\in\mathbb{R}^{n_u}$ is the control input, $y_s\in\mathbb{R}$ is the $s$-th measured output ($s=1,\ldots,n_y$) with $y[t]=\begin{bmatrix}y_1[t]&\ldots&y_{n_y}[t]\end{bmatrix}^T$, $w\in\mathbb{R}^{n_{w}}$ is the state disturbance modeled as a white noise signal of zero mean and known covariance $\E\{w[t]\,w[t]^T\}=W$, and $v_s\in\mathbb{R}$ is the $s$-th sensor noise assumed as an independent zero mean white noise signal with known variance $\E\{{v_{s}[t]}^2\}=\sigma_s^2$. Throughout this work we assume that the control input is causally available at all times, see Fig.~\ref{fig_problem}.

\begin{figure}[h]
\begin{center}
  \includegraphics[width=\linewidth]{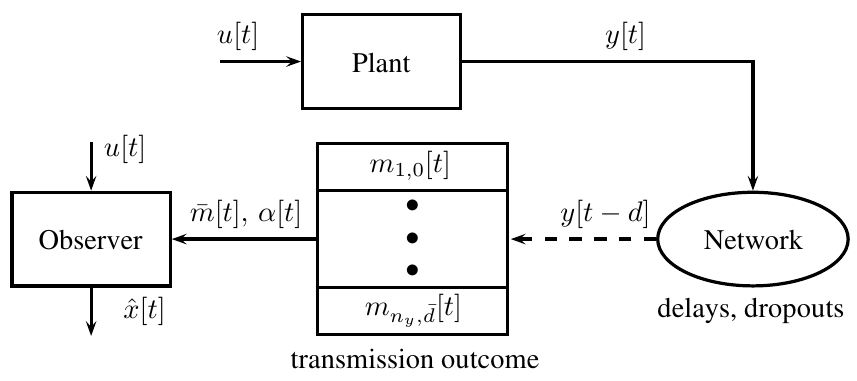}
\caption{Networked state estimator.}\label{fig_problem}
\end{center}
\end{figure}

Let us assume that samples from several sensors are taken synchronously with the input update and sent independently to the estimator unit through a network with packet dropouts and induced time-varying delays (see Fig.~\ref{fig_problem}). Let us assume synchronization between sensors and the estimator unit and time-tagged message sending. We denote as $\tau_s[t]\in\mathbb{N}$ the induced delay on the delivery of the $t$-th sample of sensor $s$, where $\tau_s[t]=\infty$ represents a measurement loss. We assume that the delays are bounded by $\bar d$; otherwise, we discard the measurement. Then, the network induced delay for all sensors can take values in a finite range $\tau[t]\in\{0,1,\ldots,\bar{d}\}\bigcup\infty$. The available information at instant $t$ at the estimator unit is the pair $(m_{s,d}[t],\alpha_{s,d}[t])$ for all  $s=1,\ldots,n_y$, $d=0,1,\ldots,\bar{d}$ being the induced delay, where
\begin{equation}\label{ec:m_s_ds}
m_{s,d}[t]=\alpha_{s,d}[t]\,y_s[t-d],
\end{equation}
and
\begin{equation}\label{def:alpha_d_i}
\alpha_{s,d}[t]=\begin{cases}
   1 & \text{if } y_{s}[t-d] \text{ is received at instant } t \\
   0 & \text{otherwise }
  \end{cases}
 \end{equation}
Note that $\alpha_{s,d}[t]=1$ represents that the induced delay of the measurement from sensor $s$ sampled at instant $t-{d}$ is $\tau_s[t-d]=d$. We consider $m_{s,d}[t]=0$ if $y_s[t-d]$ does not arrive at instant $t$.

We introduce an aggregated model to deal with delayed measurements. 
The aggregated model including the delayed states is
\begin{equation}\label{estadosaug}
\bar{x}[t+1]=\bar{A}\bar{x}[t]+\bar{B}_uu[t]+\bar{B}_ww[t],
\end{equation}
where $\bar{x}[t]=\begin{bmatrix}x[t]^T&\cdots&x[t-\bar d]^T\end{bmatrix}^T$ and
\[
\bar{A}= \begin{bmatrix}A&0&\cdots&0\\I&\cdots&0&\vdots\\\vdots&\ddots&\vdots&\vdots\\0&\cdots&I&0\end{bmatrix},
\quad \bar{B}_u=\begin{bmatrix}B_u\\0\end{bmatrix},\quad \bar{B}_w=\begin{bmatrix}B_w\\0
\end{bmatrix},
\]
We incorporate $\bar{d}$ additional fictitious sensors for each actual sensor with a different constant delay, and express the available measurements from real sensor $s$ as
\begin{equation}\label{medidasaug}
\bar{m}_{{s}}[t]=[m_{s,0}[t]\,\cdots\,m_{s,\bar{d}}[t]]^T,
\end{equation}
with $m_{s,d}[t]$ as defined in~\eqref{ec:m_s_ds}. With that, the number of total (real and fictitious) sensors is $\bar{n}_y=n_y(1+\bar{d})$.
This model handles out-of-sequence and buffered samples (see~\cite{Penarrocha2012IJSS}).

\subsection{Network modeling}
Let us define the process $\theta[t]$ which captures the measurement transmission outcomes at times $\{t-\bar{d},\ldots,t\}$ as follows:
\begin{equation}
\theta[t]=\begin{bmatrix}\theta_1[t] & \cdots & \theta_{n_y}[t]\end{bmatrix}^T,
\end{equation}
with $\theta[t]$ a binary column vector of length $n_\theta=\frac{(\bar{d}+1)(\bar{d}+2)}{2}n_y$ and where $\theta_s[t]$ represents the measurement reception at times $\{t-\bar{d},\ldots,t\}$ from sensor $s$ (cf.~\cite{Han2013})
\begin{align}
\theta_s[t]&=\begin{bmatrix}\theta_{s,0}[t]&\cdots&\theta_{s,\bar{d}}[t]\end{bmatrix},\\
\theta_{s,d}[t]&=\begin{bmatrix}\alpha_{s,0}[t-d] & \alpha_{s,1}[t-d+1] & \cdots & \alpha_{s,d}[t]\end{bmatrix}.
\end{align}
$\theta_{s,d}[t]$ represents the transmission outcome of measurement $y_s[t-d]$ at times $\{t-\bar{d},\ldots,t\}$. In real-time systems, measurement $y_s[t-d]$ can only be received once. This implies that $\|\theta_{s,d}[t]\|_1\leq1$ with $\|\theta_{s,d}[t]\|_1=\sum_{d=0}^{\bar{d}}\alpha_{s,d}[t-\bar{d}+d]$. Clearly, $\theta[t]$  is an ergodic\footnote{In an ergordic Markov chains every state can be reached from every state in a finite time.}Markov chain (see~\cite{bremau99}) that can take values in the finite set
\begin{equation}\label{eq:Deltak}
{\theta}[t]\in\Theta=\{\vartheta_0,\vartheta_1,\ldots,\vartheta_{r}\},\,\,r={((\bar{d}+2)!)}^{n_y}-1,
\end{equation}
and where $\vartheta_i$ (for $i=0,\ldots,r$) denotes each possible combination of the historical measurement transmission outcomes. $\vartheta_0=0$ denotes the case where neither of the samples from $t-\bar{d}$ to $t$ is received. To obtain the transition probabilities, we use the following assumption.
\begin{assum}\label{asum:iid}
The delays $\tau_{s}[t]$ are i.i.d random variables with $\beta_{s,d}\triangleq\Prb\{\tau_s[t]=d\}$\footnote{Note that $\Prb\{\alpha_{s,d}[t]=1\}=\Prb\{\tau_s[t]=d\}=\beta_{s,d}$.}  for $d=0,\ldots,\bar{d}$, and where $\sum_{d=0}^{\bar{d}}\beta_{s,d}\leq1$.$\hfill\square$
\end{assum}
Let us denote the tail probabilities as $\bar{\beta}_{s,d}=\Prb\{\tau_s[t]> d\}=\Prb\{\|\theta_{s,d}[t]\|_{1}=0\}$.  Using Assumpion~\ref{asum:iid}, the elements from the transition probability matrix $\Lambda=[p_{i,j}]$ with $p_{i,j}=\Prb\{\theta[t+1]=\vartheta_j\big{|}\theta[t]=\vartheta_i\}$ are calculated as
\begin{equation}\label{ec:p_ij_theta}
\prod_{s=1}^{n_y}g(t+1,\bar{d},s)/g(t,\bar{d}-1,s),
\end{equation}
where
\begin{equation}\label{ec:p_ij_theta_2}
g(t,\bar{d},s)=\displaystyle{\prod_{\substack{{d}=0\\ d:\, \|\theta_{s,d}[t+1]\|_{1}=0}}^{\bar{d}}\bar{\beta}_{s,d}}\, \displaystyle{\prod_{\substack{{d}=0\\ d:\, \alpha_{s,d}[t+1]=1}}^{\bar{d}}{\beta}_{s,d}}.
\end{equation}
Equation~\eqref{ec:p_ij_theta} is only valid for feasible transitions. A transition is feasible if $\alpha_{s,d}[t-h]$ have the same value in both $\theta[t]$ and $\theta[t+1]$, for all $s=1,\ldots,n_y$, $d=0,\ldots,\bar{d}-1$ and $h=0,\ldots,\bar{d}-1-d$.

Moreover, we denote the total probability of being at a given state $i$ as $\pi_i[t]=\Prb\{\theta[t]=\vartheta_i\}$, where $\pi[t]=[\pi_1[t],\ldots,\pi_{r}[t]]$ and $\pi[t+1]=\pi[t]\Lambda$.

Let us now define the measurement availability matrix at instant $t$ as
\begin{equation}\label{def:alpha}
\alpha[t]=f(\theta[t])=\bigoplus_{s=1}^{n_y}\left(\bigoplus_{d=0}^{\bar{d}}\alpha_{s,d}[t]\right),
\end{equation}
where $\bigoplus$ denotes the direct sum\footnote{The direct sum between of two matrices, i.e. $A\bigoplus B$, creates a block diagonal matrix with $A$ and $B$ on the diagonal.}. The possible values of $\alpha[t]$ are within a known set
\begin{equation}\label{eq:Psik}
{\alpha}[t]\in\Xi=\{\eta_0,\eta_1,\ldots,\eta_{q}\},
\end{equation}
where $\eta_i$ (for $i=1,\ldots,q$) denotes each possible combination, 
being $\eta_0$ the scenario without available measurements, (i.e., $\eta_0=0$). In the general case, any combination of available sensor measurement and delay is possible, leading to $q=2^{\bar n_y}-1$. $\alpha[t]$ is the result of applying a surjective function $f:\Theta\rightarrow\Xi$ on $\theta[t]$ meaning that $\alpha[t]$ is not a Markov variable neither i.i.d.


\begin{exmp}\label{ej:tau}
Let us consider a system with one sensor and $\bar{d}=1$. Then $\Theta=\begin{tiny}\left\{ \begin{bmatrix}0\\0\\0\end{bmatrix}, \begin{bmatrix}1\\0\\0\end{bmatrix}, \begin{bmatrix}0\\1\\0\end{bmatrix}, \begin{bmatrix}1\\1\\0\end{bmatrix},
\begin{bmatrix}0\\0\\1\end{bmatrix}, \begin{bmatrix}1\\0\\1\end{bmatrix} \right\}\end{tiny}$. Fig.~\ref{fig_markov} illustrates the relationship between $\theta_t$ and $\theta_{t+1}$. $\theta_t=\vartheta_0$ means that $y[t]$ has not arrived at $t$ (but can still arrive, i.e. $\tau[t]>0$) and that $y[t-1]$ is lost ($\tau[t-1]>1$). $\Prb\{\theta[t+1]=\vartheta_2|\theta[t]=\vartheta_0\}=0$ because $\theta[t+1]=\vartheta_2$ would imply that $\tau[t]=0$, and $\theta[t]=\vartheta_0$ guarantees that $\tau[t]>0$. However,
\begin{scriptsize}\begin{align*}
&\Prb\{\theta[t+1]=\vartheta_1|\theta[t]=\vartheta_0\}\\
&\qquad=\Prb\{\tau[t+1]=0,\tau[t]>1|\tau[t]>0,\tau[t-1]>1\}\\
&\qquad=\Prb\{\tau[t+1]=0,\tau[t]>1|\tau[t]>0\}\\
&\qquad=\Prb\{\tau[t+1]=0\}\Prb\{\tau[t]>1|\tau[t]>0\}\\ &\qquad=\Prb\{\alpha_{0}[t+1]=1\}\Prb\biggr\{{\|\theta_{1}[t+1]\|_1=0}\big{|}{\|\theta_{0}[t]\|_1=0}\biggl\}\\
&\qquad=\Prb\{\alpha_{0}[t+1]=1\}\Prb\{\|\theta_{1}[t+1]\|_1=0\}/\Prb\{\|\theta_{0}[t]\|_1=0\}\\
&\qquad=\beta_0\bar{\beta}_1/\bar{\beta}_0.
\end{align*}\end{scriptsize}
The full transition matrix can be obtained in the same way, leading to
\begin{tiny}
\begin{equation*}
\begin{bmatrix} \bar{\beta}_1&\beta_0\bar{\beta}_1/\bar{\beta}_0&0&0&\beta_1&\beta_0\beta_1/\bar{\beta}_0\\
0&0&\bar{\beta}_0&{\beta}_0&0&0\\
\bar{\beta}_1&\beta_0\bar{\beta}_1/\bar{\beta}_0&0&0&\beta_1&\beta_0\beta_1/\bar{\beta}_0\\
0&0&\bar{\beta}_0&{\beta}_0&0&0\\
\bar{\beta}_1&\beta_0\bar{\beta}_1/\bar{\beta}_0&0&0&\beta_1&\beta_0\beta_1/\bar{\beta}_0\\
0&0&\bar{\beta}_0&{\beta}_0&0&0
\end{bmatrix}.
\end{equation*}\end{tiny}
In this case,
\[\Xi\begin{tiny}=\left\{ \begin{bmatrix}0\\0\end{bmatrix}, \begin{bmatrix}1\\0\end{bmatrix}, \begin{bmatrix}0\\0\end{bmatrix}, \begin{bmatrix}1\\0\end{bmatrix}, \begin{bmatrix}0\\1\end{bmatrix}, \begin{bmatrix}1\\1\end{bmatrix} \right\}=\left\{ \begin{bmatrix}0\\0\end{bmatrix}, \begin{bmatrix}1\\0\end{bmatrix}, \begin{bmatrix}0\\1\end{bmatrix}, \begin{bmatrix}1\\1\end{bmatrix} \right\}\end{tiny},\]
where only the diagonal terms of $\eta_i$ have been represented.
\begin{figure}[h]
\begin{center}
  \includegraphics[width=\linewidth]{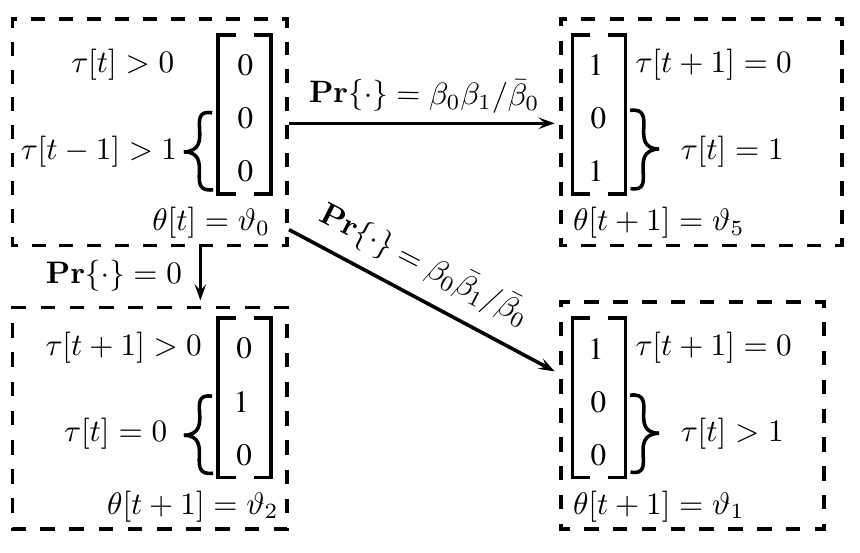}
    \caption{Markovian transitions example.}\label{fig_markov}
\end{center}
\end{figure}
$\hfill\square$
\end{exmp}

Using $\alpha[t]$, we rewrite the received measurement information at instant $t$ as
\begin{equation}\label{ec:bar_m_t}
\bar{m}[t]={\alpha}[t]\left({\bar{C}}\bar{x}[t]+\bar{v}[t]\right)
\end{equation}
with $\bar{m}[t]=[{\bar m_1[t]}^T\;\cdots\;{\bar m_{n_y}[t]}^T]^T$, $\bar{v}_t=[\bar v_{1}[t]\;\cdots\;\bar v_{\bar n_y}[t]]^T$ and $\bar v_{1}[t]=[v_{1}[t]\;\cdots\;v_{1}[t]]$. The rows of ${\bar{C}}$ are ${\bar{c}}_{s}=[{\bar{c}}_{s,0}\,\cdots\,{\bar{c}}_{s,{\bar{d}}}]^T$ with  ${\bar{c}}_{s,d}=[0_{1\times n\cdot d}\; c_{s}\;0_{1\times n\cdot(\bar d-d)}]^T$. In~\eqref{ec:bar_m_t}, $\bar{v}[t]$ is the measurement noise vector with covariance $\E\{\bar{v}[t]\bar{v}[t]^T\}=V=\bigoplus_{s=1}^{n_y}\left(\bigoplus_{{d}=0}^{\bar d}\sigma_{s}^2\right).$


\subsection{Proposed observer}
Let us represent $x[t]$ as $x_t$. We propose the following state estimation algorithm. At each instant $t$, the model is run in open loop leading to the prior estimation
\begin{equation}\label{open}
\hat{\bar{x}}_{t^-}=\bar A\,\hat{\bar{x}}_{t-1}+\bar B_u\,u_{t-1}.
\end{equation}
If no measurement is received, the best estimation of the system state is the prior estimation, i.e., $\hat{\bar{x}}_{t}=\hat{\bar{x}}_{t^-}$. Otherwise, the estimation state is updated as
\begin{equation}\label{ec:estimator_update}
\hat{\bar{x}}_{t}=\hat{\bar{x}}_{t^-}+L[t](\bar m_{t}-\alpha_t\bar C\hat{\bar{x}}_{t^-}),
\end{equation}
where $L[t]$ is the updating gain matrix.


Considering~\eqref{estadosaug} and~\eqref{ec:bar_m_t}-\eqref{ec:estimator_update}, the dynamic of the estimation error, defined as
$\tilde{x}_{t}=\bar{x}_{t}-\hat{\bar{x}}_{t}$, is
\begin{equation}
\tilde{x}_t=(I-L[t]\alpha_t\bar C)\left(\bar{A}\tilde{x}_{t-1}+\bar{B}_ww_{t-1}\right)-L[t]\alpha_t \bar{v}_{t}.
\end{equation}

The aim of this work is to compute the gain matrices $L[t]$ that minimize the state estimation error while requiring low computing and storage capabilities. Thus, we propose to relate the gains with $\theta_t$ as $L[t]=L(\theta_t)$.


In the motivating example in~\cite{Smith2003}, the authors showed that the gains obtained with a Kalman filter depend on the history of combination of sensor availability. In the present work we extend their result to delayed measurements and multisensor transmission defining the gains as
\begin{equation}\label{def:Lt}
L(\theta_t)=\begin{cases}
   0 \: \: \text{if } f(\theta_t)=\eta_0, \text{\small{(no measurement received)}}\\
      L_i \:\:  \text{if } \theta_t=\vartheta_i,\,f(\vartheta_i)\neq\eta_0
  \end{cases}
\end{equation}
The matrices  are computed off-line leading to the finite set
\begin{equation}\label{setL}
L(\theta_t)\in\mathcal{L}=\{L_{0},\ldots,L_{r}\}.
\end{equation}
We will next show how to design such an observer when imposing constraints over $\mathcal{L}$.

\section{Observer design}\label{sec:obsvdsgn}
As the Markov chain $\{\theta_t\}$ is ergodic, it has a stationary distribution which satisfies $\pi=\pi\Lambda$. We assume the initial condition $\pi[0]=\pi$, and in consequence $\pi[t]=\pi\,\forall t$. Based on this assumption, the following theorem expresses the evolution of the state estimation error covariance matrix. 
\begin{thm}\label{teor:Pt}
Let $P_{t-1,i}=\E\{\tilde x_{t-1}\tilde x_{t-1}^T|\theta[t-1]=\vartheta_i\}$ (with $i=1,\ldots,r$) be the covariance matrix for the state estimation error updated at the measurement instant $t-1$ with information $\theta[t-1]=\vartheta_i$ . The expected value of the covariance matrix at the measurement instant $t$, $\E\{\tilde x_{t}\tilde x_{t}^T\}$, is given by
\begin{align}\label{ec:Pt}
\sum_{j=0}^{r}\E\{\tilde x_{t}\tilde x_{t}^T|\theta[t]=\vartheta_j\}\Prb\{\theta[t]=\vartheta_j\}=\sum_{j=0}^{r}P_{t,j}\pi_j,
\end{align}
where $P_{t,j}$ is defined by
\begin{align}\label{ec:Pt_j}
\sum_{i=0}^{r}p_{i,j}\frac{\pi_i}{\pi_j}\left(F_{j}(\bar{A}P_{t-1,i}\bar{A}^T+\bar{B}_wW\bar{B}_w^T)F_{j}^T+X_{j}VX_{j}^T\right),
\end{align}
with
\begin{equation}\label{ec:F_X}
F_{j}=I-L_{j}\,f(\theta_j)\bar C,\quad X_{j}=L_{j}f(\theta_j).
\end{equation}
\end{thm}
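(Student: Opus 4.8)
The plan is to compute the conditional error covariance $P_{t,j}$ by conditioning on the previous state $\theta[t-1]=\vartheta_i$, using the law of total expectation together with the Markov transition probabilities, and then averaging over all feasible predecessors $i$ with the appropriate conditional weights. The key observation is that once we condition on $\theta[t]=\vartheta_j$, the gain matrix is deterministic and equal to $L_j$ (by the definition~\eqref{def:Lt} and~\eqref{setL}), so the error recursion
\[
\tilde{x}_t=(I-L[t]\alpha_t\bar C)\left(\bar{A}\tilde{x}_{t-1}+\bar{B}_ww_{t-1}\right)-L[t]\alpha_t\bar{v}_{t}
\]
becomes, conditionally, the linear recursion $\tilde{x}_t=F_j(\bar{A}\tilde{x}_{t-1}+\bar{B}_w w_{t-1})-X_j\bar{v}_t$ with $F_j$, $X_j$ as in~\eqref{ec:F_X}. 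This is the step that makes everything tractable.

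\textbf{Step 1.} Expand $\E\{\tilde x_t\tilde x_t^T\mid \theta[t]=\vartheta_j\}$ using the conditional recursion. Since $w_{t-1}$ and $\bar v_t$ are zero-mean, white, mutually independent, and independent of $\tilde x_{t-1}$, all cross terms vanish, leaving
\[
P_{t,j}=F_j\bar A\,\E\{\tilde x_{t-1}\tilde x_{t-1}^T\mid\theta[t]=\vartheta_j\}\bar A^T F_j^T+F_j\bar B_w W\bar B_w^T F_j^T+X_j V X_j^T.
\]
Here I must be careful: the noise covariances $W$ and $V$ are unconditional, so I need that conditioning on $\theta[t]$ does not change the distribution of $w_{t-1}$ and $\bar v_t$ — this holds because $\theta[t]$ is built from the arrival indicators $\alpha_{s,d}$, which under Assumption~\ref{asum:iid} are independent of the process and measurement noises.

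\textbf{Step 2.} Handle the term $\E\{\tilde x_{t-1}\tilde x_{t-1}^T\mid\theta[t]=\vartheta_j\}$. Apply the law of total expectation over $\theta[t-1]$:
\[
\E\{\tilde x_{t-1}\tilde x_{t-1}^T\mid\theta[t]=\vartheta_j\}=\sum_{i=0}^{r}\E\{\tilde x_{t-1}\tilde x_{t-1}^T\mid\theta[t-1]=\vartheta_i,\theta[t]=\vartheta_j\}\,\Prb\{\theta[t-1]=\vartheta_i\mid\theta[t]=\vartheta_j\}.
\]
The conditional expectation reduces to $P_{t-1,i}$ because $\tilde x_{t-1}$ depends only on the past up to $t-1$ and, given $\theta[t-1]$, the future transition to $\theta[t]$ adds no information about $\tilde x_{t-1}$ (again using the i.i.d.\ structure of the arrival process, which makes $\theta[t]$ conditionally independent of $\tilde x_{t-1}$ given $\theta[t-1]$). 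Then Bayes' rule gives $\Prb\{\theta[t-1]=\vartheta_i\mid\theta[t]=\vartheta_j\}=p_{i,j}\pi_i/\pi_j$, using the stationarity assumption $\pi[t]=\pi$. Substituting back and moving $\bar A P_{t-1,i}\bar A^T$ and $\bar B_w W\bar B_w^T$ under the common factor $F_j(\cdot)F_j^T$ yields~\eqref{ec:Pt_j}. Finally~\eqref{ec:Pt} is just the law of total expectation applied to $\E\{\tilde x_t\tilde x_t^T\}$ over $\theta[t]$ with weights $\pi_j$.

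\textbf{The main obstacle} is justifying the conditional-independence statements carefully — specifically that conditioning on $\theta[t]$ (or on both $\theta[t-1]$ and $\theta[t]$) leaves the noise terms and the lagged error $\tilde x_{t-1}$ with the same distribution they would have conditioned on $\theta[t-1]$ alone. This rests entirely on Assumption~\ref{asum:iid}: the delays, hence the indicators $\alpha_{s,d}$ and the aggregated state $\theta[t]$, are generated independently of $\{w_t\}$, $\{v_{s,t}\}$ and of the state trajectory, and the increments that distinguish $\theta[t]$ from $\theta[t-1]$ involve only fresh arrival events at time $t$. Once this is granted the rest is the standard propagate-and-update covariance bookkeeping, and the edge case $f(\vartheta_j)=\eta_0$ (where $L_j=0$, $F_j=I$, $X_j=0$) is consistent with~\eqref{ec:F_X} and needs no separate treatment.
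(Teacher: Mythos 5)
Your proposal is correct and follows essentially the same route as the paper's own proof: the law of total expectation over $\theta[t-1]$ combined with Bayes' rule and stationarity to obtain the weights $p_{i,j}\pi_i/\pi_j$, plus the independence of $\tilde x_{t-1}$, $w_{t-1}$ and $\bar v_t$ to kill the cross terms (you merely apply the two steps in the opposite order, which is immaterial since $\sum_i p_{i,j}\pi_i/\pi_j=1$). Your explicit discussion of the conditional-independence justifications is more careful than the paper's one-line appeal to independence, but it is the same argument.
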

\begin{proof} See Appendix~\ref{proof:teor_Pt}
\end{proof}
The previous theorem establishes a recursion for the covariance matrix. We thus write $\Pm_t=\mathfrak{E}\{\Pm_{t-1}\}$, where $\Pm_t\triangleq(P_{t,0},\ldots,P_{t,r})$, $\mathfrak{E}\{\cdot\}\triangleq(\mathfrak{E}_0\{\cdot\},\ldots,\mathfrak{E}_{r}\{\cdot\})$, being $\mathfrak{E}_{i}\{\cdot\}$ the linear operator that returns equation~\eqref{ec:Pt_j}. In order to compute the observer gains off-line, one must find the stable solution to the Riccati equation $\mathfrak{E}\{\Pm_{t-1}\}=\Pm_{t-1}$. In general, for cases where the observer gain depends on each state of the Markov chain, an explicit expression on the observer gain values can be found using the methods of~\cite{Smith2003} and~\cite{Han2013}. However, the methods applied in those works become untractable for the design of an observer that share the same gain for different states of the Markov chain. Hence, those methods do not directly allow to explore trade-offs between storage complexity and estimation performance. To address this issue, we adopt the following alternative optimization problem
\begin{subequations}\label{problema}
\begin{align}
&\min_{{\mathcal{L}},\Pm}\;\; \mathrm{tr}(\sum_{j=0}^{r}P_{j}\pi_j)\label{problema_1}\\
&\,\mathrm{s.t.}\;\;\; \;\mathfrak{E}\{\Pm\}-\Pm\preceq 0,\label{problema_2}
\end{align}
\end{subequations}
with $\Pm\triangleq(P_{0},\ldots,P_{r})$.

As we shall see next, the constraint in~\eqref{problema_2} is instrumental for guaranteeing boundedness of $\E\{\tilde x_{t}\tilde x_{t}^T\}$, and therefore stochastic stability. Note that the next results are independent on the constraints over ${\mathcal{L}}$.

\subsection{Boundedness of the covariance}
We show in the following that if we apply the gains $\mathcal{L}$ obtained from problem~\eqref{problema}, then the sequence $\{\Pm_t\}$ (and thus $\{\E\{x_tx_t^T\}\}$) converges to the unique solution $\bar{\mathcal{P}}\triangleq(\bar{P}_1,\ldots,\bar{P}_{r})$ obtained in~\eqref{problema}.

Let us first introduce the following lemma, extended from~\cite{Sinopoli2004}, where $\bar{\mathcal{P}}\succ 0$ denotes $\bar{P}_i\succ0,\,\forall i=1\,\ldots,r$.
\begin{lem}\label{lem:OperaLin} Define the linear operator
\begin{align*}
\mathcal{T}_j(\mathcal{Y})=\sum_{i=0}^{r}p_{i,j}\frac{\pi_i}{\pi_j}F_j\bar{A}Y_i{\bar{A}}^TF_j^T
\end{align*}
where $\mathcal{T}(\cdot)\triangleq\left(\mathcal{T}_0(\cdot),\ldots,\mathcal{T}_{r}(\cdot)\right)$ and $\mathcal{Y}\triangleq\left(Y_0,\ldots,Y_{r}\right)$. Suppose that there exists $\bar{\mathcal{Y}}\triangleq\left(\bar{Y}_0,\ldots,\bar{Y}_{r}\right)\succ0$ such that $\mathcal{T}(\bar{\mathcal{Y}})\prec\bar{\mathcal{Y}}$. Then,
(a) for all $\mathcal{W}\triangleq\left(W_0,\ldots,W_{r}\right)\succeq0$, $\lim_{t\rightarrow\infty}\mathcal{T}^t(\mathcal{W})=0$\footnote{$\mathcal{T}^t\{\cdot\}$ represents the recursion of $\mathcal{T}\{\cdot\}$.};
(b) let $U\succeq0$ and consider the linear system $\mathcal{Y}_{t+1}=\mathcal{T}(\mathcal{Y}_t)+U$, initialized at $\mathcal{Y}_0$, then the sequence  $\{\mathcal{Y}_t\}$ is bounded.$\hfill\square$
\end{lem}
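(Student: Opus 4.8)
The plan is to exploit three elementary structural properties of the operator $\mathcal{T}$: it is linear (hence positively homogeneous), it is \emph{positive}, meaning $\mathcal{W}\succeq0$ implies $\mathcal{T}(\mathcal{W})\succeq0$, and it is \emph{monotone}, i.e. $\mathcal{X}\preceq\mathcal{Y}$ implies $\mathcal{T}(\mathcal{X})\preceq\mathcal{T}(\mathcal{Y})$, where $\preceq$ between tuples is understood componentwise. Positivity and monotonicity follow immediately from the fact that each summand $F_j\bar A Y_i\bar A^TF_j^T$ is a congruence transformation of $Y_i$, hence operator-monotone and positivity-preserving in $Y_i$, together with nonnegativity of the coefficients $p_{i,j}\,\pi_i/\pi_j$. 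These three properties are the only facts about $\mathcal{T}$ that the argument uses, so the result is indeed independent of any constraints imposed on $\mathcal{L}$.

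For part (a), I would first convert the strict inequality $\mathcal{T}(\bar{\mathcal{Y}})\prec\bar{\mathcal{Y}}$ into a uniform geometric decay rate. Since $\bar Y_i\succ0$ and $\bar Y_i-\mathcal{T}_i(\bar{\mathcal{Y}})\succ0$ for every $i$, there is for each $i$ a scalar $\lambda_i\in[0,1)$ with $\mathcal{T}_i(\bar{\mathcal{Y}})\preceq\lambda_i\bar Y_i$; setting $\lambda\triangleq\max_i\lambda_i<1$ gives $\mathcal{T}(\bar{\mathcal{Y}})\preceq\lambda\bar{\mathcal{Y}}$. Using homogeneity and monotonicity, a straightforward induction then yields $\mathcal{T}^t(\bar{\mathcal{Y}})\preceq\lambda^t\bar{\mathcal{Y}}$ for all $t$. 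Given an arbitrary $\mathcal{W}\succeq0$, positive-definiteness of $\bar{\mathcal{Y}}$ allows us to pick $\alpha>0$ large enough that $\mathcal{W}\preceq\alpha\bar{\mathcal{Y}}$; then positivity, monotonicity and homogeneity give $0\preceq\mathcal{T}^t(\mathcal{W})\preceq\alpha\lambda^t\bar{\mathcal{Y}}$, and letting $t\to\infty$ forces $\mathcal{T}^t(\mathcal{W})\to0$, which is (a).

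For part (b), I would write the affine recursion in closed form as $\mathcal{Y}_t=\mathcal{T}^t(\mathcal{Y}_0)+\sum_{k=0}^{t-1}\mathcal{T}^k(U)$. The first term tends to $0$ by part (a). For the series, choosing $\alpha'>0$ with $U\preceq\alpha'\bar{\mathcal{Y}}$ yields $\mathcal{T}^k(U)\preceq\alpha'\lambda^k\bar{\mathcal{Y}}$, so the partial sums obey $0\preceq\sum_{k=0}^{t-1}\mathcal{T}^k(U)\preceq\frac{\alpha'}{1-\lambda}\bar{\mathcal{Y}}$; being monotone nondecreasing and bounded above in the semidefinite order, they converge, and in particular $\{\mathcal{Y}_t\}$ is bounded (in fact convergent).

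The only delicate point is the first step of part (a): extracting a single rate $\lambda<1$ valid for the entire tuple from the strict matrix inequality, and then verifying that monotonicity propagates the dominating bound $\alpha\bar{\mathcal{Y}}$ through arbitrarily many iterations of $\mathcal{T}$ without losing the geometric factor $\lambda^t$; everything else is bookkeeping with the semidefinite order. I would also note in passing that the hypothesis of Lemma~\ref{lem:OperaLin} is precisely what the feasibility constraint~\eqref{problema_2} supplies once the noise contributions are discarded, which is why this lemma is the natural tool for the boundedness and stochastic-stability arguments that follow.
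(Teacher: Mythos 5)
Your proof is correct. Note that the paper itself gives no proof of this lemma: it is stated as ``extended from \cite{Sinopoli2004}'' and closed with a box, and the appendix only proves Theorems~1--3. Your argument --- positivity, monotonicity and homogeneity of $\mathcal{T}$, extraction of a uniform rate $\lambda<1$ from the strict inequality $\mathcal{T}(\bar{\mathcal{Y}})\prec\bar{\mathcal{Y}}$ via $\bar{Y}_i^{-1/2}\mathcal{T}_i(\bar{\mathcal{Y}})\bar{Y}_i^{-1/2}\preceq\lambda_i I$, domination of an arbitrary $\mathcal{W}\succeq0$ by $\alpha\bar{\mathcal{Y}}$, and the geometric-series bound for the affine recursion --- is precisely the standard proof of the corresponding lemma in that reference, adapted to the tuple-valued operator here, so there is nothing to fault. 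The only cosmetic remark is that in part (b) you invoke part (a) for $\mathcal{T}^t(\mathcal{Y}_0)$, which presumes $\mathcal{Y}_0\succeq0$; for a general symmetric initialization one would split $\mathcal{Y}_0$ into positive and negative semidefinite parts and apply (a) to each, but in the paper's use of the lemma the initial condition is always positive semidefinite, so this is immaterial.
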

Using the above lemma, the following theorem proves the boundedness of $\{\Pm_t\}$.
\begin{thm}\label{teor:boundP}Under Assumption~\ref{asum:iid}, suppose that
the set $\mathcal{L}$ in~\eqref{setL} fulfills restriction~\eqref{problema_2}, i.e.,
there exists $\bar{\Pm}\succ0$ such that $\mathfrak{E}\{\bar{\Pm}\}\preceq\bar{\Pm}$. Then, for any initial condition $\Pm_0\succeq0$ the sequence $\{\Pm_t\}$ is bounded, i.e., $\{\Pm_t\}\preceq M_{\Pm}$, with $M_{\Pm}\triangleq\left(M_{P_0},\ldots,M_{P_{r}}\right)$.
\end{thm}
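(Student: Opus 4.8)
The plan is to show that the nonlinear recursion $\Pm_t = \mathfrak{E}\{\Pm_{t-1}\}$ is dominated, term by term, by an affine recursion of the form treated in Lemma~\ref{lem:OperaLin}(b), and then invoke that lemma. First I would decompose $\mathfrak{E}_j\{\cdot\}$ from~\eqref{ec:Pt_j} into its ``homogeneous'' part and its ``forcing'' part: write
\begin{align*}
\mathfrak{E}_j\{\Pm_{t-1}\} = \mathcal{T}_j(\Pm_{t-1}) + U_j,
\end{align*}
where $\mathcal{T}_j$ is exactly the linear operator of Lemma~\ref{lem:OperaLin} and
\begin{align*}
U_j = \sum_{i=0}^{r} p_{i,j}\frac{\pi_i}{\pi_j}\left(F_j \bar{B}_w W \bar{B}_w^T F_j^T + X_j V X_j^T\right) \succeq 0
\end{align*}
is a constant (time-independent) tuple, nonnegative because $W\succeq0$, $V\succeq0$. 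Thus $\Pm_t = \mathcal{T}(\Pm_{t-1}) + U$ with $U \triangleq (U_0,\ldots,U_r)\succeq 0$.

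The second step is to verify the hypothesis of Lemma~\ref{lem:OperaLin}, namely the existence of $\bar{\mathcal{Y}}\succ 0$ with $\mathcal{T}(\bar{\mathcal{Y}})\prec\bar{\mathcal{Y}}$. Here I would use the assumed $\bar{\Pm}\succ0$ satisfying $\mathfrak{E}\{\bar{\Pm}\}\preceq\bar{\Pm}$, i.e. $\mathcal{T}(\bar{\Pm}) + U \preceq \bar{\Pm}$ componentwise. Since $U\succeq0$ this already gives $\mathcal{T}(\bar{\Pm})\preceq\bar{\Pm}$, but Lemma~\ref{lem:OperaLin} wants a strict inequality. To obtain strictness I would perturb: because $\bar{\Pm}\succ0$, for a small enough $\epsilon>0$ the tuple $\bar{\mathcal{Y}}\triangleq(1+\epsilon)\bar{\Pm}$ is still positive definite, and by linearity $\mathcal{T}(\bar{\mathcal{Y}}) = (1+\epsilon)\mathcal{T}(\bar{\Pm}) \preceq (1+\epsilon)(\bar{\Pm}-U) \prec (1+\epsilon)\bar{\Pm} = \bar{\mathcal{Y}}$, using $U\succeq0$ and $\bar\Pm\succ0$ to make the last inequality strict (if some $U_j=0$ one can instead note $(1+\epsilon)(\bar P_j - U_j) = \bar P_j - U_j + \epsilon(\bar P_j-U_j)$ and absorb the strictness into $\epsilon \bar P_j \succ 0$). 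Either way, $\mathcal{T}(\bar{\mathcal{Y}})\prec\bar{\mathcal{Y}}$ holds, so Lemma~\ref{lem:OperaLin}(b) applies to the system $\Pm_t = \mathcal{T}(\Pm_{t-1}) + U$.

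The third step is bookkeeping on the initial condition and the conclusion. Given any $\Pm_0\succeq0$, Lemma~\ref{lem:OperaLin}(b) (with the role of $\mathcal{Y}_0$ played by $\Pm_0$ and of $U$ by our $U$) yields that $\{\Pm_t\}$ is bounded; I would then define $M_{P_j}$ as any fixed positive-definite matrix dominating the (convergent, hence bounded) sequence $\{P_{t,j}\}_{t\ge0}$ — for instance $M_{P_j} = \sup_t P_{t,j}$ in the Loewner order, whose existence is precisely what boundedness in part~(b) guarantees — and set $M_{\Pm}\triangleq(M_{P_0},\ldots,M_{P_r})$. One should also remark that $\Pm_t\succeq0$ for all $t$ (immediate from the expression~\eqref{ec:Pt_j} as a sum of congruences of PSD matrices), so the claimed two-sided bound $0\preceq\{\Pm_t\}\preceq M_{\Pm}$ makes sense.

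The main obstacle I anticipate is purely the strictness issue in step two: the constraint~\eqref{problema_2} is stated with $\preceq$, whereas Lemma~\ref{lem:OperaLin} is phrased with $\prec$, so the argument must genuinely exploit $\bar{\Pm}\succ0$ (strict positive definiteness of the certificate, which \emph{is} assumed) to upgrade $\mathcal{T}(\bar\Pm)\preceq\bar\Pm$ to a strict inequality for a slightly inflated certificate. Everything else — the affine decomposition, the sign of $U$, and the final boundedness conclusion — is routine once that gap is bridged. A secondary, minor point worth stating explicitly is that the per-state bound $\{\Pm_t\}\preceq M_{\Pm}$ immediately implies boundedness of $\E\{\tilde x_t \tilde x_t^T\} = \sum_j P_{t,j}\pi_j \preceq \sum_j M_{P_j}\pi_j$, which is the stochastic-stability statement promised in the discussion preceding the theorem.
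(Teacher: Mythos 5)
Your overall route is the same as the paper's: split $\mathfrak{E}_j$ into the homogeneous part $\mathcal{T}_j$ plus a constant forcing tuple $U$, verify the hypothesis of Lemma~\ref{lem:OperaLin}, and invoke part~(b). The decomposition and the final bookkeeping are fine. The gap sits exactly at the step you yourself flagged as the main obstacle: the $\epsilon$-inflation does not produce strictness. Since $\mathcal{T}$ is linear, hence positively homogeneous, $\mathcal{T}\bigl((1+\epsilon)\bar{\Pm}\bigr)=(1+\epsilon)\mathcal{T}(\bar{\Pm})$, so
\[
(1+\epsilon)\bar{\Pm}-\mathcal{T}\bigl((1+\epsilon)\bar{\Pm}\bigr)=(1+\epsilon)\bigl(\bar{\Pm}-\mathcal{T}(\bar{\Pm})\bigr),
\]
which is positive definite if and only if $\bar{\Pm}-\mathcal{T}(\bar{\Pm})$ already is: scaling a Loewner inequality by a positive constant cannot upgrade $\preceq$ to $\prec$. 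Your fallback for the case $U_j=0$ fails for the same reason --- the term $\epsilon\bar{P}_j$ into which you want to ``absorb the strictness'' appears on both sides of the comparison and cancels, leaving a gap of exactly $(1+\epsilon)U_j$, which is only positive semi-definite. So your argument establishes $\mathcal{T}(\bar{\mathcal{Y}})\prec\bar{\mathcal{Y}}$ only when $U\succ0$, in which case the un-inflated certificate $\bar{\Pm}$ would have sufficed in the first place.

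For comparison, the paper closes this step by asserting $U\succ0$ outright (``$U$ contains the disturbance and noise covariance, both positive definite''), giving $\mathcal{T}(\bar{\Pm})\prec\mathfrak{E}\{\bar{\Pm}\}\preceq\bar{\Pm}$ --- itself a delicate claim, since $\bar{B}_wW\bar{B}_w^T$ is rank-deficient in the augmented state space and $F_j$ need not be invertible. If you want to close your gap without any strictness assumption, note that boundedness follows directly from monotonicity and homogeneity of the positive linear map $\mathcal{T}$: from $\mathcal{T}(\bar{\Pm})+U\preceq\bar{\Pm}$ one gets by telescoping $\sum_{k=0}^{t-1}\mathcal{T}^k(U)\preceq\bar{\Pm}$ and $\mathcal{T}^t(\bar{\Pm})\preceq\bar{\Pm}$ for all $t$; choosing $c\geq1$ with $\Pm_0\preceq c\,\bar{\Pm}$ (possible because $\bar{\Pm}\succ0$) then yields $\Pm_t=\mathcal{T}^t(\Pm_0)+\sum_{k=0}^{t-1}\mathcal{T}^k(U)\preceq(c+1)\bar{\Pm}$, with no appeal to Lemma~\ref{lem:OperaLin} or to $U\succ0$.
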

\begin{proof} See Appendix~\ref{proof:teor_boundP}
\end{proof}
By means of the previous theorem, the next result establishes that $\{\Pm_t\}$ converges to the solution of problem~\eqref{problema}.
\begin{thm}\label{teor:conver}Under Assumption~\ref{asum:iid}, suppose that
the set $\mathcal{L}$ in~\eqref{setL} solves problem~\eqref{problema}.
Then, for any initial condition $\Pm_0\succeq0$, the iteration $\Pm_{t+1}=\mathfrak{E}\{\Pm_t\}$ converges to the unique positive semi-definite solution $\bar{\Pm}$ obtained in problem~\eqref{problema}, i.e., $ \lim_{t\rightarrow\infty} \Pm_t=\lim_{t\rightarrow\infty}\mathfrak{E}^t\{\Pm_0\}=\bar{\Pm}\succeq0$, where $\bar{\Pm}=\mathfrak{E}\{\bar{\Pm}\}$.
\end{thm}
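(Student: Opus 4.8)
The plan is to combine Theorem~\ref{teor:boundP} (boundedness of $\{\Pm_t\}$) with a monotonicity argument to extract a convergent subsequence, identify its limit as a fixed point of $\mathfrak{E}\{\cdot\}$, and finally prove uniqueness of that fixed point among positive semi-definite tuples by invoking Lemma~\ref{lem:OperaLin}. The operator $\mathfrak{E}\{\cdot\}$ splits, componentwise, as the affine operator $\mathfrak{E}_j\{\Pm\}=\mathcal{T}_j(\Pm)+U_j$ where $U_j=\sum_{i=0}^{r}p_{i,j}\tfrac{\pi_i}{\pi_j}\bigl(F_j\bar B_wW\bar B_w^TF_j^T+X_jVX_j^T\bigr)\succeq0$ collects the disturbance terms and $\mathcal{T}_j$ is the homogeneous part from Lemma~\ref{lem:OperaLin}. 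Since the set $\mathcal{L}$ solves problem~\eqref{problema}, the constraint~\eqref{problema_2} supplies a $\bar{\Pm}\succ0$ with $\mathfrak{E}\{\bar{\Pm}\}\preceq\bar{\Pm}$, equivalently $\mathcal{T}(\bar{\Pm})\preceq\bar{\Pm}-U\preceq\bar{\Pm}$; a standard perturbation (replacing $\bar{\Pm}$ by $\bar{\Pm}+\epsilon\mathcal{I}$ for small $\epsilon>0$) gives the strict inequality $\mathcal{T}(\bar{\mathcal{Y}})\prec\bar{\mathcal{Y}}$ required to apply the lemma.

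First I would establish monotonicity of $\mathfrak{E}\{\cdot\}$: because $\mathcal{T}_j$ is a sum of congruences $Y_i\mapsto F_j\bar A Y_i\bar A^TF_j^T$ with nonnegative weights $p_{i,j}\pi_i/\pi_j$, it is monotone ($\mathcal{Y}\preceq\mathcal{Y}'\Rightarrow\mathcal{T}(\mathcal{Y})\preceq\mathcal{T}(\mathcal{Y}')$), hence so is the affine map $\mathfrak{E}\{\cdot\}$. Next I would run two reference iterations. Starting from $\Pm_0^{(0)}=0$, monotonicity plus $\mathfrak{E}\{0\}=U\succeq0$ gives $\Pm_0^{(0)}\preceq\mathfrak{E}\{\Pm_0^{(0)}\}$, and inductively the sequence $\Pm_t^{(0)}\triangleq\mathfrak{E}^t\{0\}$ is nondecreasing; by Theorem~\ref{teor:boundP} it is bounded above by $M_{\Pm}$, so it converges to some $\bar{\Pm}^{(0)}\succeq0$, which by continuity of $\mathfrak{E}\{\cdot\}$ satisfies $\bar{\Pm}^{(0)}=\mathfrak{E}\{\bar{\Pm}^{(0)}\}$. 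Symmetrically, starting from any $\Pm_0\succeq0$ that also satisfies $\Pm_0\succeq M$ is not available in general, so instead I sandwich: for arbitrary $\Pm_0\succeq0$ I pick $\Pm_0\preceq N$ for $N$ large (e.g. $N=M_{\Pm}$ enlarged so that $\Pm_0\preceq N$ and $\mathfrak{E}\{N\}\preceq N$, which exists by the boundedness theorem applied with initial condition $N$), and then $\mathfrak{E}^t\{N\}$ is nonincreasing and bounded below by $0$, hence converges to a fixed point $\bar{\Pm}^{(\infty)}$.

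The crux is uniqueness: I claim $\mathfrak{E}\{\cdot\}$ has exactly one positive semi-definite fixed point. Suppose $\Pm^a$ and $\Pm^b$ are both fixed points. Subtracting, $\Pm^a-\Pm^b=\mathfrak{E}\{\Pm^a\}-\mathfrak{E}\{\Pm^b\}=\mathcal{T}(\Pm^a)-\mathcal{T}(\Pm^b)=\mathcal{T}(\Pm^a-\Pm^b)$, so the difference $\Delta\triangleq\Pm^a-\Pm^b$ is a fixed point of the \emph{linear} operator $\mathcal{T}$. Decomposing $\Delta=\Delta^{+}-\Delta^{-}$ into positive and negative semi-definite parts is awkward because $\mathcal{T}$ does not respect that splitting; instead I would argue directly that $\Delta=\mathcal{T}^t(\Delta)$ for all $t$, bound $\mathcal{T}^t(\Delta)$ by $\mathcal{T}^t(\mathcal{W})$ for a suitable $\mathcal{W}\succeq0$ dominating $\pm\Delta$ (take $W_i=\|\Delta_i\|\,I\succeq\pm\Delta_i$), and then invoke part (a) of Lemma~\ref{lem:OperaLin}: $\lim_{t\to\infty}\mathcal{T}^t(\mathcal{W})=0$. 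Since $-\mathcal{T}^t(\mathcal{W})\preceq\mathcal{T}^t(\Delta)=\Delta\preceq\mathcal{T}^t(\mathcal{W})$ for every $t$, letting $t\to\infty$ forces $\Delta=0$, i.e. $\Pm^a=\Pm^b$. This shows $\bar{\Pm}^{(0)}=\bar{\Pm}^{(\infty)}=:\bar{\Pm}$, and it is positive semi-definite (indeed $\succeq0$, being a limit of $\mathfrak{E}^t\{0\}$). Finally, for an arbitrary initial condition $\Pm_0\succeq0$, the sandwich $\mathfrak{E}^t\{0\}\preceq\mathfrak{E}^t\{\Pm_0'\}\preceq\mathfrak{E}^t\{N\}$ — where $\Pm_0'$ is $\Pm_0$ clamped between $0$ and $N$, and monotonicity propagates the ordering — pins $\mathfrak{E}^t\{\Pm_0\}$ between two sequences both converging to $\bar{\Pm}$, so $\lim_{t\to\infty}\Pm_t=\bar{\Pm}$. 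I expect the main obstacle to be the uniqueness step, specifically handling the indefinite difference $\Delta$ cleanly: the domination trick with $\|\Delta_i\|I$ is the key device, and one must be careful that the inequalities $\pm\mathcal{T}^t(\mathcal{W})\succeq\mathcal{T}^t(\pm\Delta)$ are preserved by $\mathcal{T}$ (they are, again by monotonicity of $\mathcal{T}$ on the semi-definite cone).
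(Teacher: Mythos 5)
Your argument follows essentially the same route as the paper: split $\mathfrak{E}\{\cdot\}=\mathcal{T}(\cdot)+U$, iterate monotonically upward from $\Qm_0=0$ and downward from a dominating initial tuple, and sandwich an arbitrary $\Pm_0\succeq0$ between the two. Two remarks on the details. First, your construction of the upper reference point $N$ with $\Pm_0\preceq N$ and $\mathfrak{E}\{N\}\preceq N$ is not actually delivered by ``the boundedness theorem applied with initial condition $N$'' --- Theorem~\ref{teor:boundP} gives boundedness of a trajectory, not the one-step decrease $\mathfrak{E}\{N\}\preceq N$. You can repair this either by taking $N=c\bar{\Pm}$ for $c\geq1$ large enough (since $\mathfrak{E}\{c\bar{\Pm}\}=c\,\mathcal{T}(\bar{\Pm})+U\preceq c\bar{\Pm}-(c-1)U\preceq c\bar{\Pm}$ and $\bar{\Pm}\succ0$ lets $c\bar{\Pm}$ dominate any $\Pm_0$), or by doing what the paper does: take any $\Gm_0\succeq\bar{\Pm}$ dominating $\Pm_0$ and observe $\Gm_{t+1}-\bar{\Pm}=\mathcal{T}^{t+1}(\Gm_0-\bar{\Pm})\to0$ by Lemma~\ref{lem:OperaLin}(a), with no monotone-decrease claim needed. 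Your uniqueness argument (the difference $\Delta$ of two fixed points satisfies $\Delta=\mathcal{T}^t(\Delta)$, squeezed by $\pm\mathcal{T}^t(\mathcal{W})\to0$ with $W_i=\|\Delta_i\|I$) is correct and is actually a cleaner, more explicit treatment of uniqueness than the paper gives.

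The one genuine omission is the last clause of the theorem: you prove that the iteration converges to the unique positive semi-definite fixed point of $\mathfrak{E}\{\cdot\}$, but you never connect that fixed point to the tuple ``obtained in problem~\eqref{problema}.'' The optimizer $\hat{\Pm}$ of~\eqref{problema} is only constrained to satisfy $\mathfrak{E}\{\hat{\Pm}\}\preceq\hat{\Pm}$, so a priori it need not be a fixed point at all. The paper closes this with a short contradiction: if $\mathfrak{E}\{\hat{\Pm}\}\neq\hat{\Pm}$, then $\mathfrak{E}\{\hat{\Pm}\}$ is itself feasible (by monotonicity of $\mathfrak{E}$) and has strictly smaller weighted trace, contradicting optimality; hence the optimizer is a fixed point and, by your uniqueness step, equals the limit of the iteration. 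Without this step your proof establishes convergence to \emph{a} fixed point but not that it is the $\bar{\Pm}$ returned by the design procedure, which is the quantity the theorem is really about.
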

\begin{proof} See Appendix~\ref{proof:teor_conver}
\end{proof}
\subsection{Numerical issues}\label{subsec:numerical}
Problem~\eqref{problema} can be solved using the following linear matrix inequalities and bilinear equality constraints,
\begin{subequations}\label{optimiza}
\begin{align}
&\min_{{\mathcal{L}},\Pm,\Rm}\;\mathrm{tr} \left(\sum_{j=0}^{r}P_{j}\pi_j\right)\\
&\begin{bmatrix} \vspace{0.1cm}P_j&\bar{\bar{M}}_j\,\bar{\bar{A}}&\bar{\bar{M}}_j\,\bar{\bar{W}}&\bar{\bar{X}}_j\,\bar{\bar{V}}\\
\vspace{0.1cm}\bar{\bar{A}}^T\,\bar{\bar{M}}_j^T&\bar{\bar{R}}&0&0\\
\vspace{0.1cm}\bar{\bar{W}}^T\,\bar{\bar{M}}_j^T&0&\bar{\bar{W}}&0\\
\bar{\bar{V}}^T\,\bar{\bar{X}}_j^T&0&0&\bar{\bar{V}}
\end{bmatrix}\succeq 0,\forall j=0,\ldots,\,r\label{LMI}\\
&\bar{\bar{P}}\,\bar{\bar{R}}=I\label{BME1}
\end{align}
\end{subequations}
with
\begin{align*}
&\bar{\bar{X}}_j=\left[\sqrt{p_{0,j}\pi_{0}/\pi_j}L_{j}f(\vartheta_j)\;\cdots\;\sqrt{p_{r,j}\pi_{r}/\pi_j}L_{j}f(\vartheta_j)\right],\\
&\bar{\bar{M}}_j=\left[\sqrt{p_{0,j}\pi_{0}/\pi_j}F_{j}\;\cdots\;\sqrt{p_{r,j}\pi_{r}/\pi_j}F_{j}\right],\,\,\,\bar{\bar{A}}=\bigoplus_{i=0}^{r}\bar{A},\\
&\bar{\bar{W}}=\bigoplus_{j=0}^{r}\bar{B}_wW{\bar{B}_w}^T,\,\,\bar{\bar{V}}=\bigoplus_{j=0}^{r}V,\,\,\bar{\bar{R}}=\bigoplus_{j=0}^{r}R_j,\,\,\bar{\bar{P}}=\bigoplus_{j=0}^{r}P_j,
\end{align*}
$\Rm\triangleq({R}_1,\ldots,{R}_{r})$ and $F_j$ as defined in~\eqref{ec:F_X}. Applying extended Schur complements on~\eqref{LMI} makes problem~\eqref{problema} and~\eqref{optimiza} equivalent. 

The optimization problem~\eqref{optimiza} is a nonconvex optimization problem because of the terms $R_j=P_j^{-1}$ in~\eqref{BME1}. We address this problem with the cone complementarity linearization algorithm (~\cite{ElGhaoui97}) over a bisection algorithm. The algorithm is omitted for brevity; an example can be found in~\cite{penarrocha2013inferential}.
\section{Design Trade-offs}\label{sec:trade}
In this work, we explore the trade-off between estimation performance versus jump estimator complexity.
Since the gains are related to $\theta_t$, the solution of the previous section 
leads to a number of non zero different gain matrices equal to\footnote{$|\mathfrak{L}|$ denotes the cardinal of the set $\mathfrak{L}$, i.e., the number of elements of $\mathfrak{L}$.} $|\mathfrak{L}|={(\bar{d}+1)!}^{n_y}((\bar{d}+2)^{n_y}-1)$ being $\mathfrak{L}$ the non zero gain matrices fulfilling
$\mathcal{L}=\mathfrak{L}\bigcup\{0\}$ (see~\eqref{setL}). We can reduce the observer complexity by imposing some equality constraints over the set $\mathcal{L}$ as ${L}_{i}={L}_{j}$ in problem~\eqref{optimiza}. 
Reducing the number of gains simplifies the numerical burden of~\eqref{optimiza}, as the number of decision variables are shortened. 
To implement an observer with a simple online look-up-table procedure and low storage requirements, we propose the following preconfigured sets of equalities over the possible historical measurement transmission outcomes $\Theta$ (see~\eqref{eq:Deltak}):
\begin{itemize}
\item \textbf{S1}. The observer gain is independent of the measurement scenario (cf.~\cite{Schenato08}), $|\mathfrak{L}_{S1}|=1$.
\item \textbf{S2}. The observer gains depend on the number of real sensors from which measurements arrive successfully at each instant, $|\mathfrak{L}_{S2}|=n_y$.
\item \textbf{S3}. The observer gains depend on the number of real and fictitious sensors from which measurements arrive successfully at each instant, $|\mathfrak{L}_{S3}|=\bar{n}_y$.
\item \textbf{S4}. The observer gains depend on the measurement recepetion at a given instant $\alpha_t$ (see~\eqref{def:alpha}), $|\mathfrak{L}_{S4}|=2^{\bar{n}_y}-1$.
\item \textbf{S5}. The observer gains are related to the historical measurement transmission outcomes $\theta_t$, $|\mathfrak{L}_{S5}|={(\bar{d}+1)!}^{n_y}((\bar{d}+2)^{n_y}-1)$.
\end{itemize}

These gain grouping approaches, allow us to trade-off between implementation cost and estimation performance. S1 leads to the lowest cost and largest estimation error covariance, S5 gives the highest cost and best performance. The example section explores this idea.

\begin{rem} \cite{Han2013} proposed a gain that jump with the possible instant and arrival delay for each measurement in a finite set. Adapting their proposal to ours and considering Example~\ref{ej:tau}, would lead to $\mathcal{L}=\begin{tiny}\left\{ \begin{bmatrix}0&0\end{bmatrix}, \begin{bmatrix}l_1&0\end{bmatrix}, \begin{bmatrix}0&0\end{bmatrix}, \begin{bmatrix}l_1&0\end{bmatrix},
\begin{bmatrix}0&l_2\end{bmatrix}, \begin{bmatrix}l_1&l_2\end{bmatrix} \right\}\end{tiny}$, with $l_1,l_2\in\mathbb{R}^{2\times1}$ decision variables. Defining ${L}_1=\begin{bmatrix}l_1&l_2\end{bmatrix}$ and extending to the multisensor case, the method is equal to case S2.
\end{rem}

\begin{exmp}
Considering Example~\ref{ej:tau}, the proposed scenarios will impose ${\mathcal{L}}_{S1}={\mathcal{L}}_{S2}=\left\{0,L_1,0,L_1,L_1,L_1\right\}$, ${\mathcal{L}}_{S3}=\left\{0,L_1,0,L_1,L_1,L_5\right\}$, ${\mathcal{L}}_{S4}=\left\{0,L_1,0,L_1,L_4,L_5\right\}$, ${\mathcal{L}}_{S5}=\left\{0,L_1,0,L_3,L_4,L_5\right\}$.
\end{exmp}

\section{Examples}\label{sec:ej}
We consider the following system (randomly chosen)
\begin{align*}
A&=\begin{bmatrix}0.73&-0.42\\0.42&0.73\end{bmatrix}+\rho,\,B_w=\begin{bmatrix}0.01&0.13\\0.01&0.08\end{bmatrix},\\
C&=\begin{bmatrix}0.53&0.39\\0.72&0.35\end{bmatrix},
\end{align*}

with $B_u=\begin{bmatrix}-0.33&0.34\end{bmatrix}^T$, and where $0\leq\rho\leq0.5$. 
$\rho$ makes the maximum absolute eigenvalue of $A$ (denoted by $|\lambda(A)|_{\max}$) vary between $0.8422\leq|\lambda(A)|_{\max}\leq1.5013$. The state disturbance and sensor noises covariances are 
\begin{equation*}
W=\begin{bmatrix}
    0.26  & -0.003\\
   -0.003  &  0.25\\
\end{bmatrix},\;
\begin{bmatrix}
\sigma_1^2\\\sigma_2^2
\end{bmatrix}=\begin{bmatrix}
0.0086\\0.0079
\end{bmatrix}.
\end{equation*}
The measurements are independently acquired through a communication network that induces a delay that varies between 0 and 1. Thus, the amount of fictitious sensors is 4,  $|\Theta|={(1+2)!}^2=36$ (see~\eqref{eq:Deltak}), and $|\Xi|=2^4=16$ (see~\eqref{eq:Psik}).
The probabilities of delivering a measurement with a given delay are $\beta_1=\begin{bmatrix}0.32&0.22&.46\end{bmatrix}$ and $\beta_2=\begin{bmatrix}0.22&0.32&.46\end{bmatrix}$ (where $\beta_s=\begin{bmatrix}\beta_{s,0} &\cdots & \beta_{s,\bar{d}} &\bar{\beta}_{s,\bar{d}}\end{bmatrix}$, with $s=1,2$).

Let us compare the results of the implementation of the optimal Kalman filter algorithm for model~\eqref{estadosaug}-\eqref{medidasaug} (adapted from \cite{Schenato08}) and the proposed algorithm. Let us define $P=C_x\E\{\tilde{x}_t\tilde{x}_t^T\}C_x^T$, where $C_x=[I_{n}\,0_{n\times(n\cdot \bar{d})}]$ selects the covariance corresponding to $x[t]-\hat{x}[t|t]$. Then, let us introduce
\begin{equation*}
\varepsilon(\%)=\frac{\mathrm{tr}(P_{\mathrm{Kal}}-P_{\mathrm{S}})}{\mathrm{tr}(P_{\mathrm{Kal}})}\cdot100
\end{equation*}
as the factor that indicates how large the performance loss is for a given strategy S ($P_{\mathrm{S}}$) w.r.t the one obtained with the optimal Kalman filter ($P_{\mathrm{Kal}}$).

Fig.~\ref{fig1} and Table~\ref{tab1} show that performance gets worse when $|\lambda(A)|_{\max}$ increases its value. For a stable open-loop system, a good trade-off between performance and storage requirement can be to choose case S1, where a single gain leads to an estimation performance no more than 15\% worse than the optimum. However when the system is unstable, a reasonable trade off could be to choose case $S3$,  where with 4 gains the performance is at most 19\% worse than the optimum. In the present case, the Kalman filter needs at most 976 floating-point operations per instant (including matrix inversion), while the off-line methods only need 64, which implies a reduction of a 93\% in the online computing cost.

\begin{figure}[h]
\begin{center}
  \includegraphics[width=\linewidth]{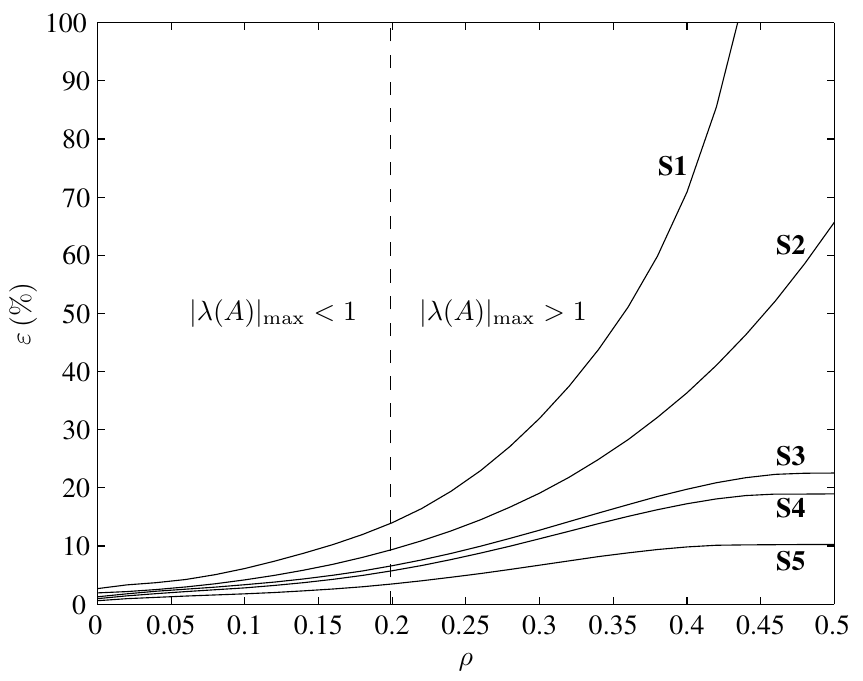}
\caption{Degradation of the estimation performance w.r.t. the optimal Kalman filter.}\label{fig1}
\end{center}
\end{figure}

\begin{table}[ht]
\begin{center}
\caption{Observers comparison.}\label{tab1}
\begin{tabular}{|l|c|c|c|c|c|}
\hline
Case & S1 & S2 & S3 & S4 & S5\\
\hline
$|\mathcal{L}|$ & 1  & 2 & 4 & 15 & 32\\
$\varepsilon$ for $\rho=0$&  2.7 &    1.9 &    1.3 & 0.9  & 0.05 \\
$\varepsilon$ for $\rho=0.5$&  231.6 &  65.7  &  22.5 & 18.9  &  10.3 \\
\hline
\end{tabular}
\end{center}
\end{table}

\section{Conclusions}\label{sec:conclu}
In this work we develop a model 
for multisensor networked estimation with time-varying delays and dropouts. We introduce a Markovian finite process that stores the measurement transmission outcomes on an interval, capturing the behavior of the network. Using this process, we design a jump state estimator for networked systems where its complexity can be chosen as a trade-off between estimation performance and storage requirements. The result is a finite set of gains that can be constrained to be equal for different values of the finite
measurement outcomes parameter. 
Numerical results confirm that the computational cost of the on-line implementation can be much lower than Kalman filter approaches, while the achieved estimation performance is close to the optimum. 

Further research may include studying Markovian delays, determining a priori the feasibility of problem~\eqref{optimiza} and analytical characterization of the performance and complexity trade-offs.
\section*{Acknowledgements}
This work has been funded by MICINN project number DPI2011-27845-C02-02, and grants PREDOC/2011/37 and  E-2013-02 from \textit{Universitat Jaume I}

\bibliographystyle{plain}
\bibliography{bibliografiacov}
\appendix

\section{Proof of Theorem~\ref{teor:Pt}}\label{proof:teor_Pt}
Equation~\eqref{ec:Pt} is obtained using the law of total probabilities. Considering the independency between $x_{t-1}$, $\bar{v}_{t}$ and $w_{t-1}$, $P_{t,j}=\E\{\tilde x_{t}\tilde x_{t}^T|\theta_{t}=\vartheta_j\}$ can be calculated as follows.
\begin{small}\begin{align*}
&\sum_{i=0}^{r}\Prb\{\theta_{t-1}=\vartheta_i|\theta_{t}=\vartheta_j\}\E\{\tilde x_{t}\tilde x_{t}^T|\theta_{t-1}=\vartheta_i,\theta_{t}=\vartheta_j\}=\\
&=\sum_{i=0}^{r}p_{i,j}\frac{\pi_i}{\pi_j}F_j(\bar{A}\E\{\tilde{x}_{t-1}\tilde{x}_{t-1}^T|\theta_{t-1}=\vartheta_i\}{\bar{A}}^T+\E\{{w}_{t-1}{w}_{t-1}^T\})F_j^T\\
&+\sum_{i=0}^{r}p_{i,j}\frac{\pi_i}{\pi_j}X_j\E\{\bar{v}_{t}\bar{v}_{t}^T\}X_j^T
\end{align*}\end{small}which leads to~\eqref{ec:Pt_j} after using $\Prb\{\theta_{t-1}=\vartheta_i|\theta_{t}=\vartheta_j\}= \Prb\{\theta_{t}=\vartheta_j|\theta_{t-1}=\vartheta_i\}\Prb\{\theta_{t-1}=\vartheta_i\}/\Prb\{\theta_{t}=\vartheta_j\}$.

\section{Proof of Theorem~\ref{teor:boundP}}\label{proof:teor_boundP}
Considering the linear operator in Lemma~\ref{lem:OperaLin}, Theorem~\ref{teor:Pt} and constraint~\eqref{problema_2}, we have $\mathcal{T}(\bar{\Pm})\prec\mathfrak{E}\{\bar{\Pm}\}\preceq\bar{\Pm}.$ Thus, $\mathcal{T}(\cdot)$ meets the condition of Lemma~\ref{lem:OperaLin}. The evolution of $\Pm_t$ is expressed as $\Pm_{t+1}=\mathfrak{E}\{\Pm_t\}=\mathcal{T}(\Pm_t)+U.$
Since $U$ contains the disturbance and noise covariance (both positive definite and bounded), then $U\succ0$, leading that
$\{\Pm_t\}$ is bounded.

\section{Proof of Theorem~\ref{teor:conver}}\label{proof:teor_conver}
First, let us show the convergence of sequence $\{\Pm_t\}$ with initial value $\Qm_0=0$, where $\Qm_t\triangleq\left(Q_{t,0},\ldots,Q_{t,r}\right)$. Let $\Qm_t=\mathfrak{E}\{\Qm_{t-1}\}=\mathfrak{E}^t\{\Qm_0\}$, then from~\eqref{ec:Pt_j}, $\Qm_1\succeq \Qm_0=0$ and $\Qm_1=\mathfrak{E}\{\Qm_0\}\preceq\mathfrak{E}\{\Qm_1\}=\Qm_2$. By induction, $\{\Qm_t\}$ is non decreasing. Also, by Lemma~\ref{lem:OperaLin}, $\{\Qm_t\}$ is bounded and by Theorem~\ref{teor:boundP} there exists an $M_{\Qm}\triangleq\left(M_{Q_0},\ldots,M_{Q_{r}}\right)$ such that $\Qm_t\preceq M_{\Qm}$ for any $t$. Hence, the sequence converges and $\lim_{k\rightarrow\infty}\Qm_t=\bar{\Pm}\succeq0$, where $\bar{\Pm}$ is a fixed point, i.e, $\bar{\Pm}=\mathfrak{E}\{\bar{\Pm}\}$.
Second, we state  the convergence of $\Gm_t=\mathfrak{E}^k\{\Gm_0\}$,  initialized at $\Gm_0\succeq\bar{\Pm}$ where $\Gm_t\triangleq\left(G_{t,0},\ldots,G_{t,r}\right)$. Since $\Gm_1=\mathfrak{E}\{\Gm_0\}\succeq\mathfrak{E}\{\bar{\Pm}\}=\bar{\Pm}$, then $\Gm_t\succeq\bar{\Pm}$ for any $t$. Moreover
$0\preceq \Gm_{t+1}-\bar{\Pm}=\mathfrak{E}\{\Gm_t\}-\mathfrak{E}\{\bar{\Pm}\}=\mathcal{T}(\Gm_t-\bar{\Pm}). $
As $\Gm_t-\bar{\Pm}\succeq0$, following the results on Lemma~\ref{lem:OperaLin}, then $0\preceq\lim_{t\rightarrow\infty}(\Gm_t-\bar{\Pm})=0$, i.e., the sequence $\{\Gm_t\}$ converges to $\bar{\Pm}$.

We demonstrate now that for any initial condition $\Pm_0\succeq0$, the iteration $\Pm_t=\mathfrak{E}\{\Pm_{t-1}\}$ converges to $\bar{\Pm}$.
Since $0\preceq \Qm_0\preceq \Pm_0\preceq \Gm_0$, we derive by induction that $0\preceq \Qm_t\preceq \Pm_t\preceq \Gm_t$. Therefore, as $\{\Qm_t\}$ and $\{\Gm_t\}$  converge to $\bar{\Pm}$, then $\{\Pm_t\}$ also converges to $\bar{\Pm}$ and the convergence is demonstrated. Finally, we need to show that
\[
\bar{\Pm}=\arg\min_{\Pm} \mathrm{tr}\left(\sum_{j=0}^{r}P_{j}\pi_j\right) \,\, \mathrm{subject\ to}\,\, \eqref{problema_2}.
\]
Suppose this is not true, i.e. $\hat{\Pm}$ solves the optimization problem, but $\hat{\Pm}\neq\mathfrak{E}\{\hat{\Pm}\}$. Since $\hat{\Pm}$  is a feasible solution, then $\hat{\Pm}\succ\mathfrak{E}\{\hat{\Pm}\}=\hat{\hat{\Pm}}$. However, this implies $\mathrm{tr}\left(\sum_{j=0}^{r}\hat{P}_{j}\pi_j\right)>\mathrm{tr}\left(\sum_{j=0}^{r}\hat{\hat{P}}_{j}\pi_j\right)$, which contradicts the hypothesis of optimality of matrix $\hat{\Pm}$. Therefore $\hat{\Pm}=\mathfrak{E}\{\hat{\Pm}\}$. Furthermore $\bar{\Pm}$ is unique since for a set of observer gains such that
\[
[\bar{\Pm},\,\,{\mathcal{L}}]=\arg\min_{\Pm,\mathcal{L}} \mathrm{tr}\left(\sum_{j=0}^{r}{P}_{j}\pi_j\right) \,\, \mathrm{subject\ to}\,\, \eqref{problema_2},
\]
we have shown that the sequence converges to $\bar{\Pm}$, and this concludes the theorem.
\end{document}